\theoremstyle{definition}
\newtheorem{thm}{Theorem}[section]
\newtheorem{lem}[thm]{Lemma}
\newtheorem{dfn}[thm]{Definition}
\newtheorem{rmk}[thm]{Remark}
\newcommand{\bei}{\begin{itemize}}
\newcommand{\eei}{\end{itemize}}
\newcommand{\QED}{\rule{0.4em}{2ex}}
\def\conj#1{\overline{#1}}
\def\Dinner#1#2{ \langle #1,\, #2 \rangle_{{}_D}}
\def\vartau{\uptau}
\def\ft#1{\widehat{#1}}
\def\T{\bold T}
\def\Top{\bold{Top}}
\def\ccite#1{\textcolor{Red}{\cite{#1}}}
\numberwithin{equation}{section}
\begin{document}

\title[\normalsize K-\lowercase{inductive} S\lowercase{tructure}]
{\Large \rm T\lowercase{he} K-\lowercase{inductive} S\lowercase{tructure} \lowercase{of} \lowercase{the} \\ \vskip5pt N\lowercase{oncommutative} 
F\lowercase{ourier} T\lowercase{ransform} }

\author{S\lowercase{amuel} G.\,W\lowercase{alters}}
\date{May 20, 2017} 
\address{Department of Mathematics and Statistics, University  of Northern B.C., Prince George, B.C. V2N 4Z9, Canada.}
\email[]{\rm Samuel.Walters@unbc.ca}
\subjclass[2000]{46L80, 46L40, 46L35, 81T30, 81T45, 83E30, 55N15, 81T45}
\dedicatory{\SMALL Dedicated to Canada on her 150\,${}^{th}$ birthday}
\keywords{C*-algebras; irrational rotation algebras; Fourier transform; automorphisms; K-theory; AF-algebras; Connes-Chern character}
\urladdr{http://hilbert.unbc.ca}

\begin{abstract} 
The noncommutative Fourier transform of the irrational rotation C*-algebra is shown to have a K-inductive structure (at least for a large concrete class of irrational parameters, containing dense $G_\delta$'s). This is a structure for automorphisms that is analogous to Huaxin Lin's notion of tracially AF for C*-algebras, except that it requires more structure from the complementary projection. 
\end{abstract}

\maketitle

\begin{quote}
{\Small \tableofcontents}
\end{quote}

\newpage

\textcolor{blue}{\Large \section{\bf Introduction}}

In this paper we prove that the noncommutative Fourier transform of the irrational rotation C*-algebra $A_\theta$ has a K-inductive structure for at least a large  class of irrationals $\theta$ (containing concrete dense $G_\delta$'s) -- see Theorem \ref{mainthm}. Let us explain this.

\medskip

Let $\mathcal B$ denote the collection of C*-algebras (which we regard as building blocks) consisting of matrix algebras, matrix algebras over the unit circle, or finite direct sums of these.  By a $\mathcal B$-type algebra we mean one that is C*-isomorphic to an algebra in the collection $\mathcal B$. For example, the Elliott-Evans structure theorem \ccite{EE} states that the irrational rotation C*-algebra $A_\theta$ can be approximated by unital C*-subalgebras of the form $M_{q}(C(\mathbb T)) \oplus M_{q'}(C(\mathbb T))$, which are in the class $\mathcal B$.

\medskip

\begin{dfn}\label{defnpreind}
Let $\alpha$ be an automorphism of a unital C*-algebra $A$. We say that $\alpha$ is {\it K-inductive} if, for each $\epsilon > 0$ and each finite subset $S$ of $A$, there exist a finite number of $\mathcal B$-type building block C*-subalgebras $B_1,\dots,B_n$ of $A$ with respective unit projections $e_1,\dots, e_n$, such that

\smallskip

(1) $B_j$ and $e_j$ are $\alpha$-invariant for each $j$,

(2) $\|e_jx - xe_j\| < \epsilon$,  $\forall x\in S$ and each $j$,

(3) $e_jxe_j$ is to within distance $\epsilon$ from $B_j$, $\forall x\in S$ and each $j$,

(4) $[e_1] + \dots + [e_n] = [1]$ in $K_0(A^\alpha)$.
\end{dfn}

Here, $A^\alpha$ is the fixed point subalgebra of $A$ (i.e., the C*-orbifold under $\alpha$). Note that the equality of $K$-classes in condition (4) is stronger than simply requiring it to hold in $K_0(A)$. A projection $e$ is a {\it  matrix projection} in $A$ when it is approximately central and is the unit of a subalgebra $B \cong M_n(\mathbb C)$ (for some $n$), and the cut downs $exe$ are close to $B$ for each $x$ in any prescribed finite subset $S\subset A$.

\medskip

The rotation C*-algebra (or noncommutative 2-torus) $A_\theta$ is the universal C*-algebra generated by unitaries $U,V$ enjoying the Heisenberg relation
\[
VU= e^{2\pi i\theta} UV.
\]
The noncommutative Fourier transform (NCFT) of $A_\theta$ is the canonical order four automorphism (or symmetry) $\sigma$ given by 
\begin{equation}
\sigma(U) = V^{-1}, \quad \sigma(V)= U.
\end{equation}
We will simply say `Fourier transform' (and drop the adjective `noncommutative'). \footnote{The connection between this C*-Fourier transform and the classical Fourier transform $\ft f$ is aptly expressed in terms of the C*-inner product equation $\sigma(\Dinner fg) = \Dinner{\ft f}{\ft g}$ as in \ccite{SWcrelles} (but we will not need this fact here).} The Elliott Fourier Transform problem, which is still open, is the problem of determining the inductive limit structure of the Fourier transform of the irrational rotation C*-algebra $A_\theta$ with respect to basic building blocks consisting of finite dimensional algebras and circle algebras. (Or, more generally, in terms of type I C*-subalgebras.) 

The main result of this paper is the following theorem, where $\mathcal G$ is any of the dense $G_\delta$ sets in $(0,1)$ constructed in Section 2.5 below.

\begin{thm} \label{mainthm} ({\bf Structure Theorem})
{\it For each irrational number $\theta$ in the dense G${}_\delta$-set $\mathcal G$, the noncommutative Fourier transform $\sigma$ of $A_\theta$ is a K-inductive automorphism with respect to matrix algebras. More specifically, 
for each $\epsilon > 0$ and each finite subset $S$ of $A_\theta$, there are three $\mathcal B$-type building block matrix C*-subalgebras 
\[
B_1 \cong M_m(\mathbb C), \qquad  B_2 \cong M_n(\mathbb C), \qquad
B_3 = M \oplus \sigma(M) \oplus \sigma^2(M) \oplus \sigma^3(M), 
\qquad M \cong M_\ell(\mathbb C)
\]
(for some integers $\ell,m,n$), with respective unit projections
\[
e_1,\qquad e_2, \qquad f = g+\sigma(g) +\sigma^2(g)+\sigma^3(g)
\]
where  $g$ is the unit projection of $M$ with $g\sigma^j(g) = 0$ (for $j=1,2,3$), such that 

\smallskip

(1) $B_1,B_2,B_3$ and $e_1,e_2$ are $\sigma$-invariant,

(2) $\|e_jx - xe_j\| < \epsilon$ and $\|gx - xg\| < \epsilon$,  $\forall x\in S$ and $j=1,2$,

(3) $e_jxe_j$ and $gxg$ are to within distance $\epsilon$ from $B_j$ and $M$, respectively, $\forall x\in S$ and each $j=1,2$.

\smallskip

\noindent Further, there exist $\sigma$-invariant unitaries $w,z$ in $A_\theta$ satisfying the equation 
\begin{equation}\label{eqnmainthm}
e_1 + we_2w^* + zfz^* = 1.
\end{equation}
}
\end{thm}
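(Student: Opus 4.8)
The plan is to build the three $\sigma$-invariant matrix subalgebras one at a time, using the Fourier transform's fixed-point structure on $A_\theta$ together with a Rieffel-type projection construction adapted to make everything $\sigma$-equivariant. The natural starting point is the $\sigma$-equivariant version of the Elliott-Evans picture: one wants an approximately central matrix projection that, together with its Fourier translates, can be assembled into $\sigma$-invariant summands. The three pieces $B_1 \cong M_m$, $B_2 \cong M_n$, $B_3 = M \oplus \sigma(M) \oplus \sigma^2(M) \oplus \sigma^3(M)$ should be thought of as follows: $B_1$ carries the part of $A_\theta$ fixed by $\sigma$ in a "balanced" way (its unit $e_1$ is genuinely $\sigma$-invariant), $B_2$ similarly but with a different matrix size (dictated by the continued-fraction data of $\theta \in \mathcal G$), and $B_3$ is the "orbit" piece: a small matrix algebra $M$ with unit $g$ satisfying $g\sigma^j(g) = 0$, whose four $\sigma$-translates are mutually orthogonal and sum to the $\sigma$-invariant projection $f$. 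This orbit structure is exactly what lets condition (2) be stated only for $g$ (not $f$) while still producing a $\sigma$-invariant $B_3$.

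**First I would** invoke whatever $\sigma$-equivariant Elliott-Evans-type approximation is available for $\theta \in \mathcal G$ (this is presumably the content of the earlier sections: the set $\mathcal G$ is engineered precisely so that the continued fraction expansion of $\theta$ admits a sequence of approximants along which $\sigma$-invariant Rieffel projections of controlled trace can be constructed). Concretely, for given $\epsilon$ and $S$, choose a convergent $p/q$ of $\theta$ fine enough that the standard Rieffel/Boca projections built from $U^q$ and a suitable smooth bump function are within $\epsilon$ of commuting with all of $S$; then symmetrize over the order-four group $\langle \sigma \rangle$ to extract $\sigma$-invariant building blocks. The dimension count is the delicate bookkeeping: because $1 = p/q \cdot q + (\theta q - p)\cdot(\text{something})$, the trace identity forces $m \cdot \tau(e_1') + n\cdot\tau(e_2') + 4\ell\cdot\tau(g) = 1$ after we pass to unnormalized matrix units, and the point of Theorem \ref{mainthm}(4) (the $K_0(A^\sigma)$ statement, here realized via \eqref{eqnmainthm}) is that this can be arranged with the three classes literally adding to $[1]$, not merely up to the trace.

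**The key steps, in order, are:** (i) produce a $\sigma$-invariant approximately central projection $e$ with $eA_\theta e$ close to a matrix algebra — this uses that $\sigma$ fixes a copy of the "Fourier-symmetric" part of $A_\theta$ and that one can average a Rieffel projection over $\langle\sigma\rangle$ while keeping its trace and its near-centrality; (ii) split the complement $1 - e$ and repeat, obtaining $e_1, e_2$ and the remaining small projection; (iii) realize the leftover as an orbit $g, \sigma(g), \sigma^2(g), \sigma^3(g)$ of mutually orthogonal matrix projections — this is where one needs enough "room," i.e. $\tau(1 - e_1 - e_2)$ small and divisible appropriately by $4$, which again is why $\theta$ is restricted to $\mathcal G$; (iv) finally, exhibit the unitaries $w, z$. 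For step (iv) the idea is that $e_1$, $e_2$, $f$ are three $\sigma$-invariant projections whose classes sum to $[1]$ in $K_0(A_\theta^\sigma)$ by construction, and the fixed-point algebra $A_\theta^\sigma$ (a C*-orbifold, known to be AF for these $\theta$) has cancellation/stable rank one, so one can find $\sigma$-invariant partial isometries inside $A_\theta^\sigma$ implementing $e_2 \sim$ (a subprojection of $1 - e_1$) and $f \sim$ (the rest), and patch them into $\sigma$-invariant unitaries $w, z$ of $A_\theta$ with $e_1 + we_2w^* + zfz^* = 1$.

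**The hard part will be** step (iii)–(iv) combined: getting the orbit projection $g$ with $g\sigma^j(g) = 0$ for $j = 1,2,3$ while simultaneously keeping $g$ approximately central (condition (2) for $g$) and $gA_\theta g$ close to a matrix algebra (condition (3) for $M$), and then making the cancellation in step (iv) happen \emph{$\sigma$-equivariantly} — ordinary $K_0$-cancellation in $A_\theta$ is classical, but we need the partial isometries to live in the fixed-point algebra (or at least to be $\sigma$-invariant), which forces us to work in $A_\theta^\sigma$ and to know its K-theory and stable rank. I expect the bulk of the proof to consist of: (a) the continued-fraction/measure-theoretic argument defining $\mathcal G$ so that the trace values line up and the required divisibilities hold along a subsequence of convergents, and (b) a careful $\sigma$-equivariant version of the Elliott–Evans projection estimates, with the symmetrization $x \mapsto \tfrac14\sum_{j=0}^3 \sigma^j(x)$ used repeatedly and its effect on norms and traces tracked explicitly. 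Everything else — the patching of $w, z$, the verification of (1)–(3) — should follow from standard functional-calculus and stability arguments once the combinatorial skeleton is in place.
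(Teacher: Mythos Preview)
Your outline has the right shape at the coarsest level (two $\sigma$-invariant matrix blocks plus one orbit block, then cancellation in $A_\theta^\sigma$), but it misses the central mechanism of the paper's proof and proposes a construction that does not work as stated.

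\medskip

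\textbf{The averaging idea fails.} The map $x \mapsto \tfrac14\sum_{j=0}^3 \sigma^j(x)$ does not take projections to projections, so you cannot ``symmetrize a Rieffel projection over $\langle\sigma\rangle$ while keeping its trace and near-centrality.'' The $\sigma$-invariant projections $e_1,e_2$ in the paper are not obtained by averaging; they are values of a continuous field $\mathcal E(t)$ of genuinely $\sigma$-invariant projections, pulled back through the canonical morphisms $\zeta_{n,\theta}:A_{n^2\theta-k}\to A_\theta$ (this is Theorem~\ref{thmdecomp} and equations \eqref{eqEplus}--\eqref{ebEminus}). Their $\sigma$-invariance is built in, not produced by averaging.

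\medskip

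\textbf{The real gap: the $K_0(A_\theta^\sigma)$ equality.} You acknowledge that the classes must add to $[1]$ ``not merely up to the trace,'' but you give no way to verify this. Since $K_0(A_\theta^\sigma)\cong\mathbb Z^9$, the trace is very far from a complete invariant. The paper's engine is the Connes--Chern character
\[
\mathbf T = (\vartau;\ \psi_{10},\psi_{11};\ \psi_{20},\psi_{21},\psi_{22}): K_0(A_\theta^\sigma)\to\mathbb C^6,
\]
built from five unbounded twisted traces $\psi_{jk}$, which is \emph{injective}. The proof computes $\mathbf T$ explicitly on the canonical projections $e_q^-$ and $e_s^+$ (formulas \eqref{eqplus}--\eqref{esminus}), then applies the parity automorphism $\gamma$ (which flips the signs of $\psi_{11}$ and $\psi_{22}$ only) to $e_s^+$ so that the five discrete invariants of $e_q^-$ and $\gamma e_s^+$ sum exactly to those of $1$. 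The leftover, having all $\psi_{jk}=0$, is then realized by a flat projection $f=g+\sigma g+\sigma^2 g+\sigma^3 g$ (flat projections automatically kill every $\psi_{jk}$). Injectivity of $\mathbf T$ then gives $[e_q^-]+[\gamma e_s^+]+[f]=[1]$ in $K_0(A_\theta^\sigma)$. None of this apparatus---the $\psi_{jk}$, their explicit values on the canonical field, the use of $\gamma$ to force cancellation---appears in your plan, and without it there is no way to certify the $K$-theory identity.

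\medskip

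\textbf{A strategic difference.} You propose to build $e_1$, then work inside the complement $1-e_1$, then inside the next complement. The paper does \emph{not} do this: $e_q^-$, $\gamma e_s^+$, and $f$ are constructed independently and are not mutually orthogonal. Only after the $K_0$-identity is established does cancellation in $A_\theta^\sigma$ produce $\sigma$-invariant unitaries $w,z$ making $e_q^- + w(\gamma e_s^+)w^* + zfz^* = 1$. The class $\mathcal G$ is engineered not for ``divisibility by $4$'' but so that (i) three specific convergents $2k/m$, $p/q$, $r/s$ with $ps-qr=1$ and prescribed parities exist, and (ii) the residual trace $\vartau_0 = 4m^2(A-B\theta)$ satisfies the hypothesis $t<\tfrac14(N\theta-M)$ of Lemma~\ref{supporttheorem}, which is what manufactures the cyclic projection $g$.
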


\bigskip

This equation \eqref{eqnmainthm} is equivalent to condition (4) in the above definition since the orbifold $A_\theta^\sigma$ has the cancellation property.

\medskip

We may schematically display the K-inductive structure of the NCFT on $A_\theta$ in terms of building blocks as
``$\bullet \oplus \bullet \oplus \circ \oplus \circ \oplus \circ \oplus \circ$" 
where each bullet $\bullet$ represents a $\sigma$-invariant matrix algebra and the open bullets $\circ$ are matrix algebras that are cyclically permuted by the NCFT.

\medskip

The notion of K-inductive is a natural extension of Huaxin Lin's notion of tracially AF for C*-algebras \ccite{HL} to automorphisms. The one difference is that whereas tracially AF means that there are plenty of finite dimensional projections whose complements are equivalent to some projection in a prescribed hereditary C*-subalgebra, in the case of K-inductive the complement is required to have a rather specific structure -- i.e., is required to be invariantly equivalent to other projections of a building block nature. 

\medskip

We believe that a similar result can be proved for any irrational $\theta$. Our choice of the classes $\mathcal G$ of irrationals makes our computations far more accessible by avoiding number theoretic complications (and helps to make the paper shorter).  A similar approach to that presented in this paper would probably also show that the cubic and hexic transforms of $A_\theta$ -- namely the canonical order 3 and 6 automorphisms studied in \ccite{BW}, \ccite{ELPW}, \ccite{SWnuclearphysicsb} -- are K-inductive automorphisms as well, with respect to matrix algebras possibly including circle algebra building blocks.  The hoped-for conclusion, then, is that all the canonical finite order automorphisms (the only orders being 2, 3, 4, and 6) are K-inductive automorphisms for all irrational $\theta$.

\textcolor{blue}{\Large \section{\bf The Framework}}

\subsection{Continuous Field of Fourier Transforms}
We write $U_t, V_t$ for the continuous sections of canonical unitaries of the continuous field $\{A_t: 0<t<1\}$ of rotation C*-algebras such that $V_t U_t = e(t) U_tV_t$, where we've used the now common notation
\[
e(t) := e^{2\pi it}.
\]
(The unitaries $U_t, V_t$ generate $A_t$ for each $t$.)
When dealing with a specific irrational rotation algebra $A_\theta$ we often write its unitary generators simply as $U,V$ instead of $U_\theta, V_\theta$. On the field $\{A_t\}$ there is a field of noncommutative Fourier transforms $\sigma_t$ given on the fiber $A_t$ by
\[
\sigma_t(U_t) = V_t^{-1},\quad \sigma_t(V_t) = U_t.
\]
Often we omit the subscript on $\sigma_t$ and simply write $\sigma$ since there will be no risk of confusion.

\subsection{Basic Matrix Approximation}
We will use the following result from \ccite{SWmathscand}, Theorem 1.5 (a result that was originally rooted in \ccite{SWcrelles}). 

\medskip

\begin{thm}\label{thmdecomp} (\ccite{SWmathscand}, Theorem 1.5.)
{\it Let $\theta>0$ be an irrational number and $p/q>0$ be a rational number in reduced form such that $0<q(q\theta-p)<1$.  Let $\sigma$ be the Fourier transform of $A_\theta$.  Then there exists a Fourier invariant smooth projection $e$ in $A_\theta$ of trace $\vartau(e) = q(q\theta-p)$ and a Fourier equivariant isomorphism
\begin{equation}\label{eta}
\eta: eA_\theta e \to M_q \otimes A_{\theta'} \qquad\text{such that} \qquad
\eta\sigma = (\Sigma\otimes\sigma')\eta
\end{equation}
where $\Sigma$ and $\sigma'$ are Fourier transform automorphisms of $M_q$ and $A_{\theta'}$, respectively, given by
\begin{equation}
\Sigma(u) = v, \quad \Sigma(v) = u^*, \quad \sigma'(a) = b^*, \quad \sigma'(b) = a,
\end{equation}
where $M_q=C^*(u,v)$ and $u,v$ are order $q$ unitary matrices with $vu=e(\frac pq)uv$, and $A_{\theta'}$ is generated by unitaries $a,b$ with $ba = e(\theta')ab$, where $\theta' = \frac{c\theta+d}{q\theta-p}$ is an irrational number in the GL$(2,\mathbb Z)$ orbit of $\theta$. (Here, $c,d$ are integers such that $cp+dq=1$.)

Furthermore, given a sequence of rational approximations $p/q$ of $\theta$ such that $0<q(q\theta-p)< \kappa <1$ for some fixed number $\kappa$, the projection $e$ is a matrix projection: $e$ is approximately central and $\eta(eUe), \eta(eVe)$ are close to order $q$ unitary generators of $M_q$ and which is Fourier invariant -- the approximations here go to 0 as $q\to\infty$.
}
\end{thm}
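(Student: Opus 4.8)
The plan is to realize the corner $eA_\theta e$ by means of Rieffel's theory of Heisenberg equivalence bimodules, building the Fourier structure into the module from the start through the compatibility of the C*-Fourier transform $\sigma$ with the \emph{classical} Fourier transform on $\sr$. Concretely, I would first set up the bimodule: the Schwartz space $\sr$ carries two commuting representations of Weyl--Heisenberg type, one from a lattice $D$ in the phase space $\mathbb R\times\ft{\mathbb R}$ (twisted by a Heisenberg multiplier $\mathfrak h$) and one from its adjoint lattice $D^\perp$ (twisted by the conjugate multiplier $\conj{\mathfrak h}$). The lattice $D$ is chosen so that $\CDh\cong A_\theta$ while $\CDperp\cong M_q\otimes A_{\theta'}$; the modular parameter is forced to be $\theta'=\frac{c\theta+d}{q\theta-p}$ by the covolume bookkeeping together with the index relation $cp+dq=1$, and the $M_q$ factor arises because $D^\perp$ carries a cyclic sublattice of order $q$. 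Completing $\sr$ then gives an $A_\theta$-$(M_q\otimes A_{\theta'})$ equivalence bimodule $\mathcal E$ with a left $A_\theta$-valued inner product $\Dinner{\cdot}{\cdot}$ and a right $(M_q\otimes A_{\theta'})$-valued inner product $\Dpinner{\cdot}{\cdot}$, linked by the imprimitivity identity $\Dinner fg\cdot h=f\cdot\Dpinner gh$.

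Next I would manufacture the projection. Choosing $f\in\sr$ with $\Dpinner ff$ invertible and normalizing so that $\Dpinner ff$ is the unit of $M_q\otimes A_{\theta'}$, the imprimitivity identity gives $ef=f$ for $e:=\Dinner ff$, whence $e^2=\Dinner{ef}{f}=e$ and $e=e^*\ge 0$, so $e$ is a projection. Its trace is read off from the fundamental covolume relation between the two inner products, which factorizes as (the matrix size $q$) $\times$ (the covolume $q\theta-p$), yielding $\vartau(e)=q(q\theta-p)$; this is where the hypothesis $0<q(q\theta-p)<1$ enters, guaranteeing that $e$ is a nontrivial proper projection. The isomorphism $\eta:eA_\theta e\to M_q\otimes A_{\theta'}$ is then the canonical identification $eA_\theta e\cong\mathrm{End}_{M_q\otimes A_{\theta'}}\!\big(f\cdot(M_q\otimes A_{\theta'})\big)\cong M_q\otimes A_{\theta'}$, and I would compute $\eta(eUe)$ and $\eta(eVe)$ explicitly by tracking the action of $U,V$ on the module vector $f$ in terms of the generators $u,v,a,b$.

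To install the Fourier structure I would take $f$ to be a \emph{Gaussian}, i.e. a fixed vector $\ft f=f$ of the classical Fourier transform, and arrange $D$ to be invariant under the $90^\circ$ phase-space rotation $\left(\begin{smallmatrix}0&-1\\1&0\end{smallmatrix}\right)$ that implements $\sigma$. The compatibility identity $\sigma(\Dinner fg)=\Dinner{\ft f}{\ft g}$ from the footnote (and its $D^\perp$ analog) then yields $\sigma(e)=\Dinner{\ft f}{\ft f}=\Dinner ff=e$, so $e$ is Fourier invariant; transported through $\eta$, the same identity produces the intertwining $\eta\sigma=(\Sigma\otimes\sigma')\eta$, with $\Sigma$ and $\sigma'$ the automorphisms induced by the Fourier permutation of the $D^\perp$-generators, which one checks to be exactly $\Sigma(u)=v,\ \Sigma(v)=u^*$ and $\sigma'(a)=b^*,\ \sigma'(b)=a$. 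Finally, for the matrix-projection assertion I would run Gaussian-tail and Poisson-summation estimates along a sequence $p/q$ with $q(q\theta-p)<\kappa$: as $q\to\infty$ the relevant lattice spacing shrinks, so the commutators $[U,e],[V,e]$ tend to $0$ and $\eta(eUe),\eta(eVe)$ converge to order-$q$ unitary generators of $M_q$.

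The main obstacle is fitting all four demands onto the single vector $f$ at once: it must make $\Dpinner ff$ a unit (a free rank-one right submodule), produce the exact trace $q(q\theta-p)$, be Fourier self-dual, and deliver the prescribed generator formulas for $\Sigma,\sigma'$ after passage through $\eta$. Reconciling the metaplectic action underlying $\sigma$ with the chosen lattice $D$ and its dual $D^\perp$ — in particular verifying that the $90^\circ$ phase-space rotation preserves both lattices and acts on the $D^\perp$-generators by the stated permutation — is the delicate technical heart, while the asymptotic matrix-projection estimates form a second, more computational, difficulty.
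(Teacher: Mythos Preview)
The paper does not actually prove this theorem: it is quoted from \ccite{SWmathscand}, Theorem~1.5, and invoked as a black box throughout. So there is no in-paper proof to compare your proposal against.

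That said, your plan is essentially the correct one, and it is the machinery developed in \ccite{SWcrelles} and \ccite{SWmathscand}. The Rieffel--Heisenberg equivalence bimodule on $\sr$, the Gaussian as a Fourier-fixed module vector, the compatibility identity $\sigma(\Dinner fg)=\Dinner{\ft f}{\ft g}$ (which the present paper itself cites in its footnote), and the covolume trace formula are precisely the ingredients those references assemble. Your diagnosis of the two main obstacles---arranging the lattice $D$ and its dual $D^\perp$ to be invariant under the phase-space quarter-rotation so that the metaplectic action descends to the stated $\Sigma\otimes\sigma'$, and the Gaussian-tail asymptotics driving the approximate-centrality and matrix-approximation claims---matches where the actual work lies.

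One technical caveat worth flagging in your sketch: for a single Gaussian $f$ one does not get $\Dpinner ff$ equal to the identity of $M_q\otimes A_{\theta'}$ on the nose. The standard maneuver is to first check that $\Dpinner ff$ is invertible (a positivity/spectral estimate depending on the lattice geometry and on $0<q(q\theta-p)<1$), then replace $f$ by $f\cdot\Dpinner ff^{-1/2}$; one must then verify that this renormalization preserves the Fourier self-duality on the $D^\perp$ side, which it does because $\Dpinner ff$ is itself $(\Sigma\otimes\sigma')$-invariant when $\ft f=f$. This step is routine but is exactly the place where the ``four demands on a single vector'' that you worry about are reconciled.
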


It is easy to see that a similar result to this theorem applies for rational approximations of $\theta$ such that $0<q(p - q\theta) <1$ -- simply by replacing $\theta$ by $1-\theta$ and using the canonical isomorphism $A_{1-\theta} \cong A_\theta$ which canonically intertwines the Fourier transform.

Since $\theta$ will be fixed throughout the paper, we will write $e_q^+$ for the above canonical projection of trace $q^2\theta-pq$, since it has positive label (or `charge'), and write $e_q^-$ for the canonical projection of trace $pq - q^2\theta$ with negative label (see \eqref{ebEminus}). According to the last assertion of Theorem \ref{thmdecomp}, we can have Fourier invariant matrix projections of both these types.

\subsection{Covariant Projections}

In \ccite{SWcontfield} (but also somewhat evident in \ccite{SWcrelles}) we showed that the Fourier invariant projections $e_q^{+}$ of Theorem 2.1 are instances of one and the same continuous field $\mathcal E(t)$ of projections of the continuous field $\{A_t\}$ of rotation algebras such that $\vartau(\mathcal E(t)) = t$. The relation is canonically furnished by equation \eqref{eqEplus} for $e_q^{+}$ and \eqref{ebEminus} for its negative charge counterpart $e_q^{-}$.

Given an irrational number $t$ and integers $n,k$, where $n\not=0$, one has the canonical unital *-morphism
\[
\zeta_{n,t}: A_{n^2t-k} \to A_t, \quad 
\zeta_{n,t}(U_{n^2t-k}) = U_t^n, \quad
\zeta_{n,t}(V_{n^2t-k}) = V_t^n. 
\]
This map clearly intertwines the Fourier transform
\[
\sigma_t \zeta_{n,t} = \zeta_{n,t}\sigma_{n^2t-k}.
\]
For rational approximations $\frac{p}q < \theta$ such that $0<q^2\theta-pq<1$, we have the projection
\begin{equation}\label{eqEplus}
e_q^+ = \zeta_{q,\theta} \mathcal E(q^2\theta-pq) \ \ \in A_\theta
\end{equation}
whose trace is $q^2\theta-pq$. This is what we mean by saying that $e_q^+$ is covariant (that it arises from the projection field $\mathcal E(t)$ in a natural manner). We could also write down negatively charged projections\footnote{In the mathematical physics literature related to string theory and noncommutative geometry, the Connes-Chern number $-b^2$, for a projection of trace $ab-b^2\theta$, is referred to as the ``charge" of the projection (or that of its associated instanton).} in $A_\theta$ defined by
\begin{equation}\label{ebEminus}
e_b^- = \nu \zeta_{b,1-\theta} \mathcal E(ab-b^2\theta)
\end{equation}
of trace $ab - b^2\theta \in (0,1)$, where $\nu$ is the canonical isomorphism 
\begin{equation}\label{nu}
\nu:A_{1-\theta}\to A_\theta, \quad \nu(U_{1-\theta}) = V_\theta, \quad 
\nu(V_{1-\theta}) = U_\theta.
\end{equation}
We will need to use the parity automorphism $\gamma$ of $A_\theta$ defined by
\[
\gamma(U) = -U, \quad \gamma(V) = -V
\]
because it commutes with the Fourier transform\footnote{In fact, $\gamma$ is the only nontrivial of the toral action automorphisms that commutes with the Fourier transform $\sigma$.} and has the nice effect of flipping the signs of two of the topological invariants below (namely, $\psi_{11},\psi_{22}$), while preserving the others.

\medskip

\subsection{Topological Invariants} 

With $U_\theta, V_\theta$ denoting the canonical unitaries satisfying
\[
V_\theta U_\theta = e(\theta) U_\theta V_\theta
\]
and the Fourier transform defined by
\[
\sigma(U_\theta) = V_\theta^{-1},\quad \sigma(V_\theta) = U_\theta
\]
the following are the basic unbounded ``trace" functionals defined on the canonical smooth dense *-subalgebra $A_\theta^\infty$:
\begin{align}\label{unboundedtraces}
\psi_{10}^\theta(U_\theta^mV_\theta^n) &= e(-\tfrac\theta4(m+n)^2)\,\delta^{m-n}_2, &
\psi_{20}^\theta(U_\theta^mV_\theta^n) &= e(-\tfrac\theta2 mn)\,\delta^m_2\delta^n_2, 
\notag \\ 
\psi_{11}^\theta(U_\theta^mV_\theta^n) &= e(-\tfrac\theta4(m+n)^2)\,\delta^{m-n-1}_2, &
\psi_{21}^\theta(U_\theta^mV_\theta^n) &= e(-\tfrac\theta2 mn) \,\delta^{m-1}_2 \delta^{n-1}_2,
 \\
& \ \ & 
\psi_{22}^\theta(U_\theta^mV_\theta^n) &= e(-\tfrac\theta2 mn)\,\delta^{m-n-1}_2,  \notag
\end{align}
where $\delta_a^{b}$ is {\it divisor delta function} defined to be 1 if $a$ divides $b$, and 0 otherwise. These maps were calculated in \ccite{SWChern} and used in \ccite{SWcjm}, \ccite{SWcrelles}, \ccite{SWcontfield}. (Sometime $\theta$ is omitted from the notation $\psi_{jk}^\theta$ when there is no risk of confusion.)

\medskip

The functionals $\psi_{1j}$ are $\sigma$-invariant $\sigma$-traces and $\psi_{2j}$ are $\sigma$-invariant $\sigma^2$-traces.  Recall that if $\alpha$ is an automorphism of an algebra $A$ (usually a pre-C*-algebra like $A_\theta^\infty$), by an $\alpha$-trace we understand a complex-valued linear map $\psi$ defined on $A$ satisfying the condition
\[
\psi(xy) = \psi(\alpha(y)x)
\]
for each $x,y$ in $A$; and we say that $\psi$ is $\sigma$-invariant when $\psi \sigma = \psi$.  (Clearly, a $\sigma$-trace is automatically $\sigma$-invariant if its domain contains the identity, but a $\sigma^2$-trace need not be $\sigma$-invariant.)  These unbounded linear functionals induce trace maps on the smooth C*-orbifold $A_\theta^{\infty, \sigma} = A_\theta^\infty \cap A_\theta^\sigma$, thereby inducing homomorphisms on $K$-theory $\psi_*: K_0(A_\theta^\sigma) \to \mathbb C$.

\medskip

In \ccite{SWChern} it was shown that $\{\psi_{10}, \psi_{11}\}$ is a basis for the 2-dimensional vector space of all $\sigma$-traces on $A_\theta^\infty$, and that  $\{\psi_{20}, \psi_{21}, \psi_{22}\}$ is a basis for the 3-dimensional vector space of all $\sigma$-invariant $\sigma^2$-traces on $A_\theta^\infty$.

\medskip

The unbounded traces $\psi_{jk}$ along with the canonical bounded trace $\vartau$ comprise the associated Connes-Chern character group homomorphism
for the fixed point algebra $A_\theta^\sigma$:
\[
\T: K_0(A_\theta^\sigma) \to \mathbb C^6, \qquad
\T(x) = (\vartau(x); \ \psi_{10}(x), \psi_{11}(x);\ \psi_{20}(x), \psi_{21}(x), \psi_{22}(x)).
\]
For the identity one has $\T(1) = (1; 1,0; 1, 0, 0)$. It will be convenient to write
\[
\T(x) = (\vartau(x); \Top(x))
\]
where
\[
\Top(x) := (\psi_{10}(x), \psi_{11}(x);\ \psi_{20}(x), \psi_{21}(x), \psi_{22}(x))
\]
consists of the discrete topological invariants of $x$. Indeed, in view of \ccite{SWChern} and \ccite{SWcjm}, the values of the unbounded traces on projections, and on $K_0(A_\theta^\sigma)$, are quantized, with $\psi_{10}, \psi_{11}$ having range in the lattice subgroup $\mathbb Z + \mathbb Z(\frac{1-i}{2})$ of $\mathbb C$; $\psi_{20}, \psi_{21}$ have range $\frac12\mathbb Z$; and $\psi_{22}$ has range $\mathbb Z$. (Cf. Lemma \ref{topinvariantsE} below which gives the topological invariants for the field $\mathcal E(t)$.)

It is straightforward to check that the parity automorphism $\gamma$ changes the signs of $\psi_{11}$ and $\psi_{22}$:
\[
\psi_{11} \gamma = - \psi_{11}, \quad \psi_{22} \gamma = - \psi_{22}
\]
and it keeps the other $\psi_{jk}$ unchanged. Thus, 
\[
\Top(\gamma(x)) = (\psi_{10}(x), -\psi_{11}(x);\ \psi_{20}(x), \psi_{21}(x), -\psi_{22}(x)).
\]

The Connes-Chern map $\T$ was shown to be injective \ccite{SWcjm} for a dense $G_\delta$ set of irrationals $\theta$, but since $K_0(A_\theta^\sigma) \cong \mathbb Z^9$ for all $\theta$ by \ccite{ELPW} or \ccite{AP}, $\T$ is injective for all irrational $\theta$.  This allows us to conclude that since $A_\theta^\sigma$ has the cancellation property for any irrational $\theta$, two projections $e$ and $e'$ in $A_\theta^\sigma$ are unitarily equivalent by a $\sigma$-invariant unitary if and only if $\T(e)=\T(e')$. 

\smallskip

\begin{dfn}
A projection $f$ is called {\it flat} (or {\it $\sigma$-flat}), when it is an orthogonal sum of the form
\[
f = g + \sigma(g) + \sigma^2(g) + \sigma^3(g) 
\]
for some projection $g$. We call such projection $g$ a {\it cyclic} subprojection for $f$ since it is orthogonal to its orbit under $\sigma$ and its orbit sum gives $f$. 
\end{dfn}

Another reason we call $f$ ``flat" is because its topological invariants vanish:
\[
\Top(f) = (0, 0; 0, 0, 0).
\]
Indeed, if $\psi$ is any of the two kinds of unbounded traces in \eqref{unboundedtraces}, we have (can assume $g$ is smooth) $\psi(g) = \psi(gg) = \psi(\sigma^j(g)g) = 0$ for $j=1,2$, hence $\psi(f) =0$.

\smallskip

In \ccite{SWcontfield} we proved that the topological invariants of the continuous section $\mathcal E(t)$ mentioned in Section 2.3 are given as follows.

\begin{lem}\label{topinvariantsE} (\ccite{SWcontfield}, Theorem 1.7.)
{\it The topological invariants of the projection section $\mathcal E(t)$ are
\[
\Top(\mathcal E(t)) = \left(\frac{1-i}2, \frac{1-i}2; \frac12, \frac12, 1\right)
\]
and its trace is $\vartau(\mathcal E(t)) = t$ for each $t\in(0,1)$.
}
\end{lem}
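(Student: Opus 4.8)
The plan is to realize $\mathcal E(t)$, for each fixed irrational $t\in(0,1)$, as a fiber of the continuous field $\{A_t\}$ and to compute its Connes--Chern invariants by identifying the value of the section at a convenient sequence of parameters. First I would recall from the construction in \ccite{SWcontfield} that $\mathcal E(t)$ is the unique (up to the relevant equivalence) continuous projection section with $\vartau(\mathcal E(t))=t$; continuity of the section, together with continuity of the (locally defined, appropriately normalized) unbounded $\sigma$-traces $\psi_{jk}$ along the field, means that $t\mapsto\Top(\mathcal E(t))$ is a continuous map into $\mathbb C^5$. But by the quantization statement recalled just above --- $\psi_{10},\psi_{11}$ take values in the lattice $\mathbb Z+\mathbb Z\frac{1-i}2$, $\psi_{20},\psi_{21}$ in $\tfrac12\mathbb Z$, and $\psi_{22}$ in $\mathbb Z$ --- the image is discrete, hence $\Top(\mathcal E(t))$ is \emph{constant} in $t$. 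So it suffices to evaluate the five numbers at a single well-chosen $t$.

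Next I would choose $t$ of the form $t=q^2\theta-pq$ for a rational approximation $p/q<\theta$ with $0<q^2\theta-pq<1$, so that by \eqref{eqEplus} we have $\mathcal E(t)$ pulled back via $\zeta_{q,\theta}$ to the Fourier-invariant projection $e_q^+\in A_\theta$ of Theorem \ref{thmdecomp}. The isomorphism $\eta:e_q^+A_\theta e_q^+\to M_q\otimes A_{\theta'}$ of \eqref{eta} is Fourier-equivariant, intertwining $\sigma$ with $\Sigma\otimes\sigma'$, so $e_q^+$ corresponds to $1_{M_q}\otimes 1_{A_{\theta'}}$ and I can compute $\psi_{jk}^\theta(e_q^+)$ by pushing the defining formulas \eqref{unboundedtraces} through $\eta$ and $\zeta_{q,\theta}$. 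Concretely, $\zeta_{q,\theta}$ sends $U_{q^2\theta-pq}^mV_{q^2\theta-pq}^n$ to $U_\theta^{qm}V_\theta^{qn}$, and one reads off from the divisor-delta structure of $\psi_{jk}^\theta$ how the invariants transform: for the simplest normalization one expects to land exactly on the vector $\bigl(\tfrac{1-i}2,\tfrac{1-i}2;\tfrac12,\tfrac12,1\bigr)$. This is essentially the content of \ccite{SWcontfield}, Theorem 1.7, which we are allowed to invoke; the role of the present lemma is to record it in the normalization used throughout this paper.

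The remaining point is the trace: $\vartau(\mathcal E(t))=t$ is immediate either from the normalization built into the continuous field (the section is set up precisely so that its trace equals the parameter) or, in the covariant picture, from $\vartau\bigl(e_q^+\bigr)=q^2\theta-pq$ in Theorem \ref{thmdecomp} combined with the fact that $\zeta_{q,\theta}$ multiplies the trace by $1/q^{\,0}$ in the relevant sense --- more carefully, that $\tau_\theta\circ\zeta_{q,\theta}=\tau_{q^2\theta-pq}$ on the canonical generators --- so that $\vartau(\mathcal E(q^2\theta-pq))=q^2\theta-pq$, and then continuity in $t$ extends this to all $t\in(0,1)$.

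The main obstacle I anticipate is purely bookkeeping rather than conceptual: keeping the normalizations of the unbounded traces $\psi_{jk}$ consistent under the pullback maps $\zeta_{q,\theta}$ and $\nu$, and checking that the phase factors $e(-\tfrac\theta4(m+n)^2)$ and $e(-\tfrac\theta2 mn)$ behave correctly when the parameter is changed from $\theta$ to $q^2\theta-pq$ (this is where the specific integer relations among $p,q$ and the $\mathrm{GL}(2,\mathbb Z)$ data enter). Once the single-fiber computation is pinned down, the continuity-plus-quantization argument makes the extension to all irrational (indeed all real) $t$ automatic, so I would present the lemma as a direct consequence of \ccite{SWcontfield} with the evaluation spelled out at one covariant parameter.
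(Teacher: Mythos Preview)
The paper does not prove this lemma at all: it is stated as a direct citation of \cite{SWcontfield}, Theorem~1.7, with no argument given. Your proposal ultimately lands in the same place (``we are allowed to invoke'' the reference), so at the level of what actually appears in the paper you are aligned.

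That said, the independent sketch you wrap around the citation has a circularity you should be aware of. Your plan is to use continuity of $t\mapsto\Top(\mathcal E(t))$ together with the lattice-quantization of the $\psi_{jk}$ to reduce to a single fiber, and then to evaluate at $t=q^2\theta-pq$ by computing $\psi_{jk}^\theta(e_q^+)$ via $e_q^+=\zeta_{q,\theta}\mathcal E(t)$. But in this paper the logical flow runs the \emph{other} way: the invariants of $e_q^+$ in \eqref{eqplus} are obtained \emph{from} Lemma~\ref{topinvariantsE} combined with Lemma~\ref{psizeta}, not computed independently. You offer no separate route to $\psi_{jk}^\theta(e_q^+)$ --- invoking the Morita isomorphism $\eta$ of \eqref{eta} does not by itself produce these numbers without a further computation of how the unbounded $\sigma$-traces behave under $\eta$, which is nowhere in this paper. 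So as a self-contained argument your evaluation step is empty.

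The continuity-plus-quantization reduction is the right framework (and is indeed the mechanism behind the result in \cite{SWcontfield}), but a genuine proof requires either a direct computation of $\psi_{jk}(\mathcal E(t_0))$ from the explicit Schwartz-function construction of the section at some $t_0$, or an independent calculation of the invariants of some $e_q^\pm$. Since the paper treats the lemma purely as an imported fact, the honest move is simply to cite it and drop the surrounding sketch.
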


This will allow us to calculate the topological invariants of the canonical projections $e_q^+$ and $e_q^-$ of Theorem \ref{thmdecomp} using the following lemma.

\begin{lem}\label{psizeta} (\ccite{SWcontfield}, Lemma 3.2.)
{\it Let $\theta_n = n^2\theta - k$ where $k,n$ are integers. The unbounded traces on $A_{\theta_n}$ and $A_\theta$ are related by $\zeta_{n,\theta}$ according to the  equations
\begin{align*}
\psi_{10}^\theta \zeta_{n,\theta} &= \psi_{10}^{\theta_n} + i^{-k}\delta_2^n \psi_{11}^{\theta_n},
 & 
\psi_{20}^\theta \zeta_{n,\theta} &=  \psi_{20}^{\theta_n} + (-1)^{k} \delta_2^{n}\psi_{21}^{\theta_n} + \delta_2^{n} \psi_{22}^{\theta_n},
\\
\psi_{11}^\theta \zeta_{n,\theta} &= i^{-k}\delta_2^{n-1} \psi_{11}^{\theta_n},
& 
\psi_{21}^\theta \zeta_{n,\theta} &=  (-1)^{k} \delta_2^{n-1} \psi_{21}^{\theta_n},
\\
 &  & \psi_{22}^\theta \zeta_{n,\theta} &=  \delta_2^{n-1} \psi_{22}^{\theta_n}.
\end{align*}
}
\end{lem}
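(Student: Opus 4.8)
\medskip

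The plan is to reduce all five identities to a computation on monomials. Each $\psi_{jk}^\theta$ is a linear functional on the smooth subalgebra $A_\theta^\infty$ whose value on a general element is the absolutely convergent sum of its values on the monomials occurring in that element's rapidly decreasing Fourier expansion (the values on monomials having modulus at most $1$), and $\zeta_{n,\theta}$ maps $A_{\theta_n}^\infty$ into $A_\theta^\infty$. Hence each of the five equations is an identity between linear functionals on $A_{\theta_n}^\infty$, and it suffices to verify it on an arbitrary monomial $U_{\theta_n}^a V_{\theta_n}^b$ with $a,b\in\mathbb Z$. Since $\zeta_{n,\theta}$ is a unital $*$-homomorphism with $\zeta_{n,\theta}(U_{\theta_n}) = U_\theta^n$ and $\zeta_{n,\theta}(V_{\theta_n}) = V_\theta^n$, one has $\zeta_{n,\theta}(U_{\theta_n}^a V_{\theta_n}^b) = U_\theta^{na}V_\theta^{nb}$, and the problem becomes to evaluate the defining formulas \eqref{unboundedtraces} at the exponent pair $(na,nb)$ and to recognize the result.

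From there I would carry out a finite bookkeeping in two discrete parameters: the parity of $n$, and the residues of $a$ and $b$ modulo $2$. First, split the phase factors by means of $n^2\theta = \theta_n + k$; for the $\psi_{1j}$ family this reads
\[
e\!\left(-\tfrac\theta4 (na+nb)^2\right) = e\!\left(-\tfrac{\theta_n}4(a+b)^2\right)\, e\!\left(-\tfrac k4 (a+b)^2\right),
\]
and for the $\psi_{2j}$ family
\[
e\!\left(-\tfrac\theta2 (na)(nb)\right) = e\!\left(-\tfrac{\theta_n}2 ab\right)\, e\!\left(-\tfrac k2 ab\right).
\]
The residual roots of unity are evaluated by elementary congruences: an integer square is congruent to $0$ or to $1$ modulo $4$ according as the integer is even or odd, so $e(-\tfrac k4(a+b)^2)$ equals $1$ when $a+b$ is even and $i^{-k}$ when $a+b$ is odd; while $kab$ is even or congruent to $k$ modulo $2$ according to the parity of $ab$, so $e(-\tfrac k2 ab)$ equals $1$ or $(-1)^k$. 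Second, split the divisor delta functions by the parity of $n$: $\delta_2^{n(a-b)}$ equals $1$ when $n$ is even and equals $\delta_2^{a-b}$ when $n$ is odd, and likewise for $\delta_2^{na}$, $\delta_2^{n(a-b)-1}$, and $\delta_2^{na-1}$. This dichotomy is precisely the source of the factors $\delta_2^n$ and $\delta_2^{n-1}$ in the statement.

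With both reductions in hand, each of the five lines becomes a short parity case-check. For the $\psi_{1j}^\theta\zeta_{n,\theta}$ identities one separates the parity of $a+b$ (equivalently of $a-b$); for the $\psi_{2j}^\theta\zeta_{n,\theta}$ identities one separates the parities of $a$ and of $b$ individually. When $n$ is even, the two (respectively three) terms on the right-hand side are supported on disjoint parity classes that together exhaust all exponents, and, together with their $i^{-k}$ or $(-1)^k$ coefficients, they reassemble exactly the phase $e(-\tfrac{\theta_n}4(a+b)^2)$, respectively $e(-\tfrac{\theta_n}2 ab)$, computed on the left. When $n$ is odd, $\delta_2^n = 0$ and $\delta_2^{n-1} = 1$, so each right-hand side collapses to its single surviving term --- namely $\psi_{10}^{\theta_n}$, $i^{-k}\psi_{11}^{\theta_n}$, $\psi_{20}^{\theta_n}$, $(-1)^k\psi_{21}^{\theta_n}$, $\psi_{22}^{\theta_n}$, respectively --- while on the left the support conditions coming from $\zeta_{n,\theta}$ coincide with those of that surviving $\psi_{jk}^{\theta_n}$, and the residual root of unity restricted to the relevant parity class supplies exactly the matching coefficient.

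I do not expect a conceptual obstacle here. The points that demand genuine care are: (i) evaluating the residual roots of unity $e(-\tfrac k4(\cdot)^2)$ and $e(-\tfrac k2(\cdot))$ on the correct residue class --- this is where the congruence ``an integer square is congruent to $0$ or $1$ modulo $4$'' is essential, and where a careless treatment would produce spurious powers of $i$; and (ii) checking that the split on the parity of $n$ lines up with the $\delta_2^n$-versus-$\delta_2^{n-1}$ alternative, term by term, in each of the five equations. Once the identity $n^2\theta = \theta_n + k$ has been used to separate the phases and the parities have been organized into a small table, each entry of the displayed array is a one-line verification.
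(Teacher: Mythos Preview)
Your proposal is correct and follows exactly the approach the paper itself indicates: immediately after stating the lemma, the paper remarks that it ``is in fact easy to check by directly working out both sides on generic unitaries $U_{\theta_n}^mV_{\theta_n}^n$,'' without giving further details. Your write-up supplies precisely those details --- the substitution $n^2\theta=\theta_n+k$ to split the phase, the congruence $(a+b)^2\equiv 0$ or $1\pmod 4$, and the parity dichotomy on $n$ governing the $\delta_2^n$/$\delta_2^{n-1}$ factors --- and the bookkeeping is accurate.
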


\medskip

(Lemma \ref{psizeta} is in fact easy to check by directly working out both sides on generic unitaries  $U_{\theta_n}^mV_{\theta_n}^n$.)

\medskip

Combining these two lemmas and applying them to the canonical projection $e_q^+ = \zeta_{q,\theta} \mathcal E(q^2\theta-pq)$, we obtain its topological invariants
\begin{equation}\label{eqplus}
\Top(e_q^+) = 
\left(\tfrac{1-i}2 [1 + (-1)^{q/2}\delta_2^q], \ \ (\tfrac{1-i}2) i^{-pq} \delta_2^{q-1}; \ \ \
\tfrac12 + \tfrac32\delta_2^q,\ \ \tfrac12 (-1)^p \delta_2^{q-1},\ \ \delta_2^{q-1}\right).
\end{equation}
Likewise, the topological invariants of the negatively charged canonical projection $e_b^-$ of trace $ab-b^2\theta$ (where $a,b$ are coprime) are
\begin{equation}\label{esminus}
\Top(e_b^-) = \left(\tfrac{1+i}2[1+(-1)^{b/2}\delta_2^b], \ \  
(\tfrac{1+i}2) i^{-ab}\delta_2^{b-1}; \ \ \
\tfrac12 + \tfrac32\delta_2^{b},\ \ \tfrac12(-1)^{a}\delta_2^{b-1}, \ \  
\delta_2^{b-1}\right).
\end{equation}
To check the latter invariants of $e_b^-$, one uses the following relations between the unbounded traces and the canonical isomorphism $\nu$ of \eqref{nu}:
\[
\psi_{10}^\theta\nu = (\psi_{10}^{1-\theta})^*, \qquad
\psi_{11}^\theta\nu = -i (\psi_{11}^{1-\theta})^*
\]
\[
\psi_{20}^\theta\nu = \psi_{20}^{1-\theta}, \qquad 
\psi_{21}^\theta\nu = - \psi_{21}^{1-\theta}, \qquad
\psi_{22}^\theta\nu =  \psi_{22}^{1-\theta}
\]
which are straightforward to check by working them out on the unitary elements $U_{1-\theta}^mV_{1-\theta}^n$. (Here, $\psi^*$ is the Hermitian adjoint $\psi^*(x) = \conj{\psi(x^*)}$.)

\medskip

To verify the first component of $\Top(e_b^-)$, for example, we compute:
\[
\psi_{10}^\theta(e_b^-) 
= \psi_{10}^\theta \nu \zeta_{b,1-\theta} \mathcal E(ab-b^2\theta) 
= \conj{ \psi_{10}^{1-\theta}\zeta_{b,1-\theta} \mathcal E(\tilde\theta) }
\]
where we have written $\tilde\theta := ab-b^2\theta = b^2(1-\theta) - (b^2-ab)$. By Lemma \ref{psizeta}, with $\theta$ replaced by $1-\theta$, and by Lemma \ref{topinvariantsE} we get 
\begin{align*}
\psi_{10}^{1-\theta} \zeta_{b,1-\theta} \mathcal E(\tilde\theta) 
&= \psi_{10}^{\tilde\theta}\mathcal E(\tilde\theta) + i
^{-(b^2-ab)}\delta_2^b \psi_{11}^{\tilde\theta} \mathcal E(\tilde\theta),
\\
&= \tfrac{1-i}2 + i^{-(b^2-ab)}\delta_2^b \tfrac{1-i}2
\\
&= \tfrac{1-i}2(1 + i^{ab}\delta_2^b)
\\
&= \tfrac{1-i}2 (1 + (-1)^{b/2}\delta_2^b)
\end{align*}
since if $b$ is odd the delta term vanishes and when $b$ is even, $a$ has to be odd so can be removed in the power of $-1$. After conjugating we obtain $\psi_{10}^\theta(e_b^-)$, as asserted. The other invariants of $e_b^-$ in \eqref{esminus} are similarly checked.

\medskip

\subsection{Class of Irrationals $\mathcal G$} 
We begin with any dense set $D$ of rational numbers $\frac{k}m$ in the open interval $(0,\frac12)$ where $k,m\ge1$. We form the following integers
\[
n = 4mk+1, \quad q = n^2, \quad s = n^2+4m^2, \quad p = 4k^2(2n+1), 
\quad r = p + 2n - 3
\]
which can easily be checked to satisfy the modular equation
\begin{equation}\label{psqr}
ps - qr = 1
\end{equation}
(for any $k,m$). Let $\kappa_2, \kappa_1$ be any fixed pair of positive numbers such that 
\begin{equation}\label{kappas}
0 < \kappa_2 \le \frac12 < \kappa_1 < 1, \qquad 1 < \kappa_1 + \kappa_2.
\end{equation}
One checks (using \eqref{psqr}) directly that the following inequality
\begin{equation}\label{preineq}
\frac{r}{s} < \frac{pq-\kappa_1}{q^2} < \frac{rs+\kappa_2}{s^2} < \frac{p}{q}
\end{equation}
holds for large enough $k$. The left inequality holds for large enough $k$ (specifically for $k$ such that $\frac{q}s > \kappa_1$, since $\frac{q}s\to1$ as $k\to\infty$). (Indeed, the left inequality holds for $k$ such that $4k^2 \ge \frac{\kappa_1}{1-\kappa_1}$.)  The middle inequality holds for all $k,m$ by virtue of \eqref{kappas}\footnote{The middle inequality yields the quadratic inequality $\kappa_2 x^2 - x + \kappa_1 > 0$, where $x = q/s$.  By \eqref{kappas}, the quadratic is a decreasing function over the interval $[0,1]$ and is positive at the endpoints, so it is positive on $[0,1]$.}, and the right inequality holds always since $\kappa_2 \le \frac12$.

\medskip

It is easy to see that the difference $\frac{p}q - \frac{2k}m$ goes to $0$ for large $k,m$, hence the set of rationals $\{\frac{p}q: k,m\ge1\}$ is dense in the open interval $(0,1)$, as also is the set $\{\frac{r}s\}$.

\medskip

We can extend slightly inequality \eqref{preineq} to the following
\begin{equation}\label{grandineq}
\frac{2km-\tfrac12}{m^2} < \frac{r}{s} < \frac{pq-\kappa_1}{q^2} 
< \theta < 
\frac{rs+\kappa_2}{s^2} < \frac{p}{q} < \frac{2k}{m}
\end{equation}
where $\theta$ will be the type of irrational that we'll be interested in. The leftmost and rightmost inequalities here can be checked to hold for all $k,m$ since they follow from the equalities
\[
rm^2 - s( 2km - \tfrac12) \ =\ 4k^2m^2 + m^2 + 2km + \tfrac12, \qquad
2kq - pm \ =\ 4k^2m + 2k.
\]
Of course, the remaining inequalities in \eqref{grandineq} hold for large enough $k,m$ depending on choice of $\kappa_1,\kappa_2$ satisfying \eqref{kappas}.

\medskip

The above leads to the construction of various dense $G_\delta$ sets of irrational numbers $\theta$ in $(0,1)$ for each choice of $\kappa_1,\kappa_2$ satisfying \eqref{kappas} and choice of dense set $D$ of rational numbers in $(0,\frac12)$. Such irrationals $\theta$ possess infinitely many pairs of integers $k,m$, and associated rational approximations $\frac{2k}m, \frac{p}q, \frac{r}s$ satisfying \eqref{grandineq}. For example, based on the inner inequality in \eqref{grandineq}, one takes a countable intersection of the dense unions of open intervals
\begin{equation}\label{G}
\mathcal G = \mathcal G_{\kappa_1, \kappa_2} \ = \ \bigcap_{N\ge1} \ \  
\bigcup_{\substack{k,m \ge N \\ \frac{k}m \in D}} 
\left(\frac{pq-\kappa_1}{q^2}, \frac{rs+\kappa_2}{s^2}\right).
\end{equation}
One could conceivably construct specific irrationals in the class $\mathcal G$.

\textcolor{blue}{\Large \section{\bf Proof of Structure Theorem}}

We begin the proof with the following lemma. If $B$ is a C*-subalgebra of $A$ and $x\in A$, we use the standard notation $d(x,B)$ for the norm distance between $x$ and $B$: $d(x,B) = \inf\{ \|x-y\|: y\in B\}$.

\medskip

\begin{lem}\label{supporttheorem}  
{\it Let $\theta>0$ be an irrational number and $M,N$ positive coprime integers such that $0 < N(N\theta-M) <1$.  Then for each \ $t \in (0,1) \cap (\mathbb Z + \mathbb Z\theta)$ such that
\[
t<\frac14(N\theta-M)
\]
there exists a cyclic projection $h$ (i.e., $h\sigma^j(h)=0$ for $j=1,2,3$) of trace
\[
\vartau(h) = Nt.
\]
If, in addition, there is a sequence of rationals $M/N$ such that $0 < N(N\theta-M) < \kappa<1$ for some fixed $\kappa$, then for each $\epsilon>0$, there are $N,M$ large enough such that
\begin{enumerate}
\item $\|hU-Uh]\|< \epsilon, \ \|hV-Vh\|< \epsilon$,
\item there is a matrix C*-subalgebra $\frak M$ of $A_\theta$ having $h$ has its unit such that 
\[
d(hUh, \frak M) < \epsilon, \quad d(hVh, \frak M) < \epsilon.
\]
\end{enumerate}
}
\end{lem}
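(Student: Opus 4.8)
# Proof Proposal

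The plan is to build the cyclic projection $h$ inside a Fourier-invariant matrix corner of $A_\theta$ and then push the classical theory of small Rieffel projections through the corner isomorphism. First I would apply Theorem \ref{thmdecomp} (in the form valid for approximations with $0<N(N\theta-M)<1$) to obtain the Fourier-invariant smooth matrix projection $e=e_N$ of trace $N(N\theta-M)$, together with the Fourier-equivariant isomorphism $\eta\colon eA_\theta e\to M_N\otimes A_{\theta'}$ satisfying $\eta\sigma=(\Sigma\otimes\sigma')\eta$. Inside $M_N\otimes A_{\theta'}$ I want a projection of the form $1_{M_N}\otimes p$, where $p$ is a projection in $A_{\theta'}$ of trace $t/(N\theta-M) = t/\vartau(e)\cdot N$; more precisely, since the normalized trace on $eA_\theta e$ pulls back $\vartau/\vartau(e)$ and the normalized trace of $1\otimes p$ is $\vartau_{\theta'}(p)$, I need $\vartau_{\theta'}(p) = Nt/(N(N\theta-M)) = t/(N\theta-M)$, which lies in $(0,\tfrac14)$ by the hypothesis $t<\tfrac14(N\theta-M)$. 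Then $h_0 := \eta^{-1}(1_{M_N}\otimes p)$ is a projection in $A_\theta$ of trace $\vartau(e)\cdot t/(N\theta-M) = Nt$, as required.

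The point of the constraint $t<\tfrac14(N\theta-M)$, i.e. $\vartau_{\theta'}(p)<\tfrac14$, is that it leaves room for the three translates $\sigma'$-orbit of $p$ to be mutually orthogonal: I want $p\,\sigma'^{j}(p)=0$ for $j=1,2,3$, so that $h_0$ inherits cyclicity from $p$ via the equivariance of $\eta$ (because $\eta\sigma=(\Sigma\otimes\sigma')\eta$ gives $\sigma(h_0)=\eta^{-1}(\Sigma(1)\otimes\sigma'(p))=\eta^{-1}(1\otimes\sigma'(p))$, and $1\otimes p$ times $1\otimes\sigma'^j(p)$ is $1\otimes p\sigma'^j(p)=0$). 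So the real subproblem is: in $A_{\theta'}$, with $\theta'$ irrational, find a projection $p$ of any prescribed trace value $\tau\in(0,\tfrac14)$ lying in $\mathbb Z+\mathbb Z\theta'$ whose orbit under the order-four automorphism $\sigma'$ consists of four mutually orthogonal projections. For this I would use Rieffel's construction of projections in irrational rotation algebras supported in a small number of $V_{\theta'}$-``columns'': a standard Rieffel projection $p$ of small trace is a function of $V_{\theta'}$ (with coefficients in $U_{\theta'}$) supported near a short arc; applying $\sigma'$ rotates its ``support'' so that for trace less than $\tfrac14$ one can arrange the four rotated supports to be disjoint, forcing orthogonality. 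One must check the trace of $p$ can be made to equal exactly the target value in $(0,\tfrac14)\cap(\mathbb Z+\mathbb Z\theta')$ — here the identity $t\in\mathbb Z+\mathbb Z\theta$ translates under the $\mathrm{GL}(2,\mathbb Z)$ relation between $\theta$ and $\theta'$ into $t/(N\theta-M)\in\mathbb Z+\mathbb Z\theta'$, which is exactly the set of attainable Rieffel trace values.

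For the second half of the lemma, suppose we run the above along a sequence $M/N$ with $0<N(N\theta-M)<\kappa<1$. The last assertion of Theorem \ref{thmdecomp} says $e=e_N$ is then a matrix projection: approximately central, with $\eta(eU_\theta e),\eta(eV_\theta e)$ close to order-$N$ unitary generators of $M_N\subset M_N\otimes A_{\theta'}$, the errors tending to $0$ as $N\to\infty$. Taking $\mathfrak M := \eta^{-1}(M_N\otimes 1)\subset eA_\theta e$ — a matrix subalgebra with unit $e$ — I would first note $h_0\le e$, but $h_0$ is not the unit of a matrix subalgebra; instead I should take $h := h_0$ and the matrix algebra $\mathfrak M_h := h_0\,\eta^{-1}(M_N\otimes 1)\,h_0$? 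That is not quite right either, so the cleaner route is: set $\mathfrak M := \eta^{-1}(M_N\otimes \mathbb C p)$, which is a copy of $M_N$ with unit $h$. Since $1\otimes p$ commutes with $M_N\otimes 1$ exactly, $\mathfrak M\cong M_N$. Then $hU_\theta h = \eta^{-1}((1\otimes p)\eta(eU_\theta e)(1\otimes p))$, and because $\eta(eU_\theta e)$ is within $o(1)$ of an element of $M_N\otimes 1$, which commutes with $1\otimes p$, one gets $hU_\theta h$ within $o(1)$ of $\eta^{-1}((M_N\otimes 1)(1\otimes p)) = \eta^{-1}(M_N\otimes \mathbb C p)=\mathfrak M$; similarly for $V_\theta$. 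Approximate centrality $\|hU_\theta-U_\theta h\|<\epsilon$ follows from combining approximate centrality of $e$ with the fact that, modulo $o(1)$, $h$ is a function of the approximately central matrix units of $\mathfrak M$ and of $p$ — here one uses that $p\in A_{\theta'}$ and $\eta(eU_\theta e)$ is close to $M_N\otimes 1$, so conjugating $p$-side data by $U_\theta$ is close to conjugating by something that commutes with $1\otimes p$; the commutator $[h,U_\theta]$ is then controlled by $[e,U_\theta]$ plus the approximation errors, all $o(1)$.

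The main obstacle I anticipate is the bookkeeping in the last paragraph: making the orthogonality of the $\sigma'$-orbit of $p$ compatible with $p$ being (approximately) in the commutant of the matrix block $M_N\otimes 1$ while simultaneously getting the quantitative estimates $\|[h,U_\theta]\|,\|[h,V_\theta]\|<\epsilon$ and $d(hU_\theta h,\mathfrak M)<\epsilon$ uniformly as $N\to\infty$. Concretely, one needs that the Rieffel projection $p\in A_{\theta'}$ can be chosen uniformly bounded in the relevant $C^1$-type seminorm as $N$ (hence $\theta'$) varies along the sequence, so that pulling it back through $\eta^{-1}$ and commuting it against $U_\theta,V_\theta$ produces errors that go to $0$ with the matrix-projection errors of $e$ rather than blowing up; controlling $\theta'=(c\theta+d)/(N\theta-M)$ and the denominators of $p$'s defining data along the sequence is the delicate point, and is presumably where the specific structure of the sequences $M/N$ (analogous to the $\mathcal G$-construction of Section 2.5) will be invoked.
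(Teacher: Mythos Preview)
Your approach is essentially the same as the paper's: apply Theorem~\ref{thmdecomp} to get the Fourier-equivariant corner isomorphism $\eta\colon eA_\theta e\to M_N\otimes A_{\theta'}$, build a $\sigma'$-cyclic projection $h'$ in $A_{\theta'}$ of trace $t/(N\theta-M)\in(0,\tfrac14)\cap(\mathbb Z+\mathbb Z\theta')$, set $h=\eta^{-1}(I_N\otimes h')$, and take $\frak M=\eta^{-1}(M_N\otimes\mathbb C h')$. The paper simply cites Theorem~1.6 of \cite{SWmathscand} for the existence of $h'$, where you sketch a Rieffel-type construction; either route is fine.

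Where you go astray is in your ``main obstacle'' paragraph. No $C^1$-type regularity of $h'$ is needed, and no uniformity in $\theta'$ is required. The approximate centrality of $h$ is a purely norm-level consequence of the approximate centrality of $e$ together with $\eta(eUe),\eta(eVe)\approx M_N\otimes1$. Concretely, for $x=U$ or $V$ write
\[
hx-xh \;=\; h(ex-xe)+(ex-xe)h+(hexe-exeh).
\]
The first two terms are bounded by $2\|ex-xe\|$ since $\|h\|\le1$. For the third, push through $\eta$: since $\eta(h)=I_N\otimes h'$ commutes exactly with $M_N\otimes1$, and $\eta(exe)$ is within $\epsilon$ of $M_N\otimes1$, one gets $\|hexe-exeh\|=\|[\eta(h),\eta(exe)]\|<2\epsilon$. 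The same mechanism gives $d(hxh,\frak M)<\epsilon$ immediately, since $(I_N\otimes h')(M_N\otimes1)(I_N\otimes h')=M_N\otimes\mathbb C h'$. At no point does the argument reference how $h'$ was built, its smoothness, or how $\theta'$ moves with $N$; only $\|h'\|\le1$ is used. So the ``delicate point'' you flag about controlling denominators and seminorms along the sequence simply does not arise.
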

\begin{proof}
Consider the canonical Fourier invariant projection $e$ in $A_\theta$ of trace $\vartau(e) = N(N\theta-M)$ given by Theorem \ref{thmdecomp} and corresponding isomorphism
\[
\eta: eA_\theta e \to M_N \otimes A_{\theta'}, \qquad \text{where} \ \ \ 
\theta' = \frac{c\theta+d}{N\theta-M} 
\]
and $c,d$ are integers such that $cM+dN=1$. 
Write $t = m + n\theta$, for some integers $m,n$, and let $K = Mn+Nm$ and $L = dn-cm$.  Then 
\[
K\theta' + L = \frac{(Mn+Nm)(c\theta+d) + (dn-cm)(N\theta-M)}{N\theta-M} = \frac{t}{N\theta-M} < \frac14
\]
so that $K\theta' + L$ is in $(0,\frac14) \cap (\mathbb Z + \mathbb Z\theta')$.  By Theorem 1.6 of \ccite{SWmathscand}\footnote{One could also use Theorem 1.5 in \ccite{SWsemiflat}.}, there exists a $\sigma'$-cyclic projection $h'$ in $A_{\theta'}$ of trace $K\theta'+L = \frac{t}{N\theta-M}$, where $\sigma'$ is the Fourier transform of $A_{\theta'}$ in Theorem \ref{thmdecomp}. This gives the cyclic projection
\[
h := \eta^{-1}(I_N\otimes h')
\]
of trace
\[
\vartau(h) = N(N\theta-M) \cdot \frac{t}{N\theta-M} = Nt.
\]
Since the isomorphism $\eta$ is Fourier covariant, as expressed by \eqref{eta}, the projection $h$ is a cyclic subprojection of $e$.

To prove the second assertion of the lemma, assume we have an infinite sequence of rationals $M/N$ such that $0 < N(N\theta-M) < \kappa<1$ for some fixed $\kappa$. In view of the second part of Theorem \ref{thmdecomp}, given $\epsilon>0$ there is $N$ large enough so that $\eta(eUe)$ and $\eta(eVe)$ are to within $\epsilon$ of some elements of the matrix algebra $M_N$.  Then
\[
\frak M := \eta^{-1}(M_N \otimes h') = h\eta^{-1}(M_N)h
\]
is a matrix C*-subalgebra of $A_\theta$ with identity element $h$. (So the algebra $\frak M$ is cyclic under $\sigma$.) As $\eta(h) = I_N\otimes h'$ commutes with $M_N \otimes h'$, the cut downs $hUh = heUeh$ and $hVh = heVeh$ are to within $\epsilon$ of elements of $\frak M$, hence condition (2) holds, and $\|ex-xe\| < \epsilon$, for $x=U,V$. To see that $h$ is approximately central, let $x=U,V$ and write
\[
hx-xh = h(ex - xe) + (ex - xe)h + h exe - exe h
\]
so that from $\|ex-xe\| < \epsilon$ one gets $\|hx-xh\| < 2\epsilon + \|hexe - exeh\|$. Further, since $\eta$ is an isometry we get
\[
\|hexe - exeh\| = \|\eta(h) \eta(exe) - \eta(exe) \eta(h)\|
\]
and since $\eta(exe)$ is to within $\epsilon$ of an element of $M_N \otimes 1$, with which $\eta(h)$ commutes, one gets $\|hexe - exeh\| < 2\epsilon$. Therefore, $\|hx-xh\| < 4\epsilon$ and $h$ is approximately central.
\end{proof}

\medskip

\begin{rmk}
We point out that the proof of this lemma can be modified slightly to give approximately central Fourier invariant projections $h$ of trace $Nt$ (with the $1/4$ factor removed from the hypothesis on $t$).
\end{rmk}

We now have the groundwork necessary in order to proceed with the proof of Theorem \ref{mainthm}.

\medskip

Fix an irrational $\theta$ in the class $\mathcal G$ given by \eqref{G}. 

The inequalities \eqref{grandineq} give three rational convergents of $\theta$ and three respective numbers
\[
0< 2km - m^2\theta < \tfrac12,  \qquad
0< pq - q^2\theta < \kappa_1, \qquad 0 < s^2\theta - rs < \kappa_2. 
\]
We are interested in the following approximately central canonical matrix projections 
\[
e_m^-, \qquad e_q^-,\qquad e_s^+
\]
with respective traces $2km - m^2\theta, \ pq - q^2\theta, \ s^2\theta - rs$. From \eqref{eqplus} and \eqref{esminus} we obtain the topological invariants of the last two to be
\begin{align}
\Top(e_q^{-}) &= (\tfrac{1+i}2,\ \tfrac{1+i}2 i^{-pq};\ \tfrac12,\ \tfrac12(-1)^{p},\  1)
\\
\Top(e_s^{+}) &= (\tfrac{1-i}2,\ \tfrac{1-i}2 i^{-rs};\ \tfrac12,\ \tfrac12(-1)^{r},\ 1)
\end{align}
as $q$ and $s$ are odd. Since $p\equiv 0$ mod $4$ and $rs\equiv -1$ mod $4$ (see first paragraph of Section 2.5), these become
\begin{align}
\Top(e_q^{-}) &= (\tfrac{1+i}2,\ \tfrac{1+i}2;\ \tfrac12,\ \tfrac12,\  1)
\\
\Top(e_s^{+}) &= (\tfrac{1-i}2,\ \tfrac{1+i}2;\ \tfrac12,\ -\tfrac12,\ 1).
\end{align}
Taking the parity $\gamma$ of $e_s^+$ gives
\begin{align}
\Top(\gamma e_s^{+}) &= (\tfrac{1-i}2,\ -(\tfrac{1+i}2);\ \tfrac12,\ -\tfrac12,\ -1)
\end{align}
and adding gives
\[
\Top(e_q^{-}) + \Top(\gamma e_s^{+}) = (1, 0; 1, 0, 0) = \Top(1).
\]
Therefore
\begin{equation}\label{Tofeqes}
\T(1) - \T(e_q^{-}) - \T(\gamma e_s^{+}) = (\vartau_0; 0, 0; 0, 0, 0)
\end{equation}
where the trace value $\vartau_0$ here is
\[
\vartau_0 = 1 - (pq-q^2\theta) - (s^2\theta - rs) = (1+rs-pq) - (s^2-q^2)\theta.
\]
Computing these in terms of the parameters $k,m$, one gets
\[
s^2-q^2 = 8m^2(2m^2+n^2) = 8m^2(16k^2m^2+8km+2m^2+1) 
= 4m^2B
\]
and
\[
1+rs-pq = 4m^2(64k^3 m + 8km +24k^2 - 1) = 4m^2 A
\]
where
\[
A = 64k^3 m+ 8km + 24k^2-1, \qquad B = 2(16k^2m^2+8km+2m^2+1).
\]
Thus, we can write 
\[
\vartau_0 = 4m^2 (A - B\theta).
\]
We now claim that $\vartau_0$ is the trace of an approximately central flat projection
\[
f = g + \sigma(g) + \sigma^2(g) + \sigma^3(g)
\]
whose cyclic subprojection $g$ is approximately central as well. 
First, it is straightforward to check that
\[
2ks-m(r+4) = 4k^2m+2k-3m > 0
\]
is positive (for all $k,m\ge1$), and that one has the equality
\[
sA - Br = 1.
\]
These give the inequality 
\[
\frac{4mA-2k}{4mB-m} < \frac{r}s < \theta
\]
from which we get  
\begin{equation}\label{t-hyp}
t := m(A-B\theta) < \frac14(2k-m\theta) < 1.
\end{equation}
To be sure that $A-B\theta > 0$, in view of \eqref{grandineq} it is enough to see that 
\[
\theta < \frac{rs+\kappa_2}{s^2} < \frac{A}{B}.
\]
Cross multiplying the last inequality here reduces it to $\kappa_2 < \frac{s}B$ (again using $sA - Br = 1$) which holds since $\kappa_2 \le \frac12$ and $\frac12 < \frac{s}B$ follows from $2s = B + 4m^2$.

\medskip

To establish the claim just made, apply Lemma \ref{supporttheorem} with
\[
N(N(1-\theta)-M) = m(2k-m\theta) = \vartau(e_m^-)
\]
i.e., with $N=m$ and $M=m-2k$, and with $t = m(A-B\theta)$. The hypothesis of this lemma that $t < \frac14(2k-m\theta)$ has already been checked in \eqref{t-hyp}.  Therefore, by Lemma \ref{supporttheorem} there exists an approximately central cyclic projection $g$ of trace
\[
\vartau(g) = mt = m^2(A-B\theta).
\]
The second part of Lemma \ref{supporttheorem} (where the ``$\kappa$" there can be taken to be $\frac12$ in view of the inequalities \eqref{grandineq} relating $\theta$ and $2k/m$) gives the matrix cut down approximation for $g$. The corresponding flat projection is then
\[
f = g + \sigma(g) + \sigma^2(g) + \sigma^3(g)
\]
with trace 
\[
\vartau(f) = 4m^2(A-B\theta) = \vartau_0.
\]
Therefore \eqref{Tofeqes} becomes
\[
\T(e_q^{-}) + \T(\gamma e_s^{+}) + \T(f) = \T(1).
\]
where all the underlying projections $e_q^{-}, e_s^{+}, g, f$ are approximately central matrix projections. Since the Connes-Chern map $\T$ is injective we get the following equality of classes in $K_0(A_\theta^\sigma)$
\[
[e_q^{-}] + [\gamma e_s^{+}] + [f] = [1]
\]
as required by Definition 1.1. Since the orbifold C*-algebra $A_\theta^\sigma$ has the cancellation property, this equation of $K$-classes gives equation \eqref{eqnmainthm} of Theorem \ref{mainthm} for some Fourier invariant unitaries $w,z$ -- namely, 
$e_q^{-} + w\gamma e_s^{+}w^* + zfz^* = 1$.

\medskip

This completes the proof of Theorem \ref{mainthm} that the Fourier transform $\sigma$ is K-inductive on the irrational rotation C*-algebra $A_\theta$.


\bigskip

\end{document}